\def\theequation{\@arabic\c@equation}
\def\d{\mathbb{D}}
\def\c{\mathbb{C}}
\def\be{\begin{equation}}
\def\ee{\end{equation}}
\def\s0{s_0}
\def\p0{p_0}
\let\phi\varphi
\let\epsilon\varepsilon
\newtheorem{theorem}{Theorem}[section]
\newtheorem{corollary}[theorem]{Corollary}
\newtheorem{proposition}[theorem]{Proposition}
\numberwithin{equation}{section}
\newtheorem{definition}[theorem]{Definition}
\newtheorem{fact*}[theorem]{Fact}
\DeclareMathOperator\hol{Hol}
\DeclareMathOperator\re{Re}
\newcommand{\cala}{\mathcal{A}}
\newcommand\e{\mathrm{e}}
\newcommand\cale{\mathcal{E}}
\newcommand{\T}{\mathbb{T}}
\newcommand{\F}{\mathcal{F}}
\newcommand{\D}{\mathbb{D}}
\newcommand{\C}{\mathbb{C}}
\newcommand{\abs}[1]{\left\vert#1\right\vert}
\newcommand{\inv}{^{-1}}
\newcommand\ov[1]{\overline{#1}}
\newcommand{\E}{\mathcal{E}}
\newcommand\la{\lambda}
\newcommand\df{\stackrel{\rm def}{=}}
\newcommand\beq{\begin{equation}}
\newcommand\eeq{\end{equation}}
\newcommand\bbm{\begin{bmatrix}}
\newcommand\ebm{\end{bmatrix}}
\newcommand\bpm{\begin{pmatrix}}
\newcommand\epm{\end{pmatrix}}
\newcommand\Diag{\mathrm{Diag}}
\newcommand\mudiag{\mu_{\mathrm{Diag}}}
\let\phi\varphi
\numberwithin{equation}{section}
\begin{document}
\title[Analytic interpolation into the tetrablock]{Analytic interpolation into the tetrablock and a $\mu$-synthesis problem}
\author{Zinaida  A.\ Lykova, N.\ J.\ Young and Amos  E. \ Ajibo}
\address{School of Mathematics, Statistics and Physics, Newcastle University, Newcastle upon Tyne
 NE\textup{1} \textup{7}RU, U.K.}
\address{N. J. Young is also affiliated to the School of Mathematics, Leeds University,  Leeds LS2 9JT, U.K.}

\date{5th May 2018}
\dedicatory{To Joe Ball in esteem and friendship}
\subjclass[2010]{32F45, 30E05, 93B36, 93B50}
\keywords{tetrablock, interpolation,  matricial Nevanlinna-Pick problem, $\mu$-synthesis problem}

\thanks{ The first and second  authors were partially supported by the UK Engineering and Physical Sciences Research Council grants  EP/N03242X/1.  The third author  was supported by the Government of Nigeria.}

\begin{abstract} 
We give a solvability criterion for a special case of the $\mu$-synthesis problem.  That is, we prove the necessity and sufficiency of
a condition  for the existence of an analytic $2 \times 2$
matrix-valued  function on the disc subject to a bound on the structured singular value
and satisfying a finite set of
interpolation conditions.   To do this we prove 
 a realization theorem for analytic functions from the disc to the
tetrablock.  We also obtain a solvability criterion for the problem of
analytic interpolation from the disc to the tetrablock. 
\end{abstract}



\maketitle
\sloppy
\fussy
\pagenumbering{arabic}
\setcounter{page}{1}

\section{Introduction}\label{newintro}
The application of operators on Hilbert space to complex analysis has enjoyed about a century of fruitful development since G. Pick gave it a spectacular impulse in \cite{Pick}.  For  the second half of the century Joseph Ball been a prolific contributor to the theory.  One important strand in his research has been to broaden the range of application of operator-theoretic methods to ever new areas of function theory.  In this paper we also take a small step in extending established methods to a new type of domain.

Many of the operator-theoretic methods developed over the first half-century work most smoothly for problems involving analytic functions from the open unit disc $\d$ into certain very special domains, such as Cartan domains.  Such is the case for the methods originated by D. Sarason \cite{sar67} and V. M. Adamyan, D. Z. Arov and M. G. Krein \cite{aak}.    Ball has been prominent from an early stage both in introducing new approaches to the classical problems and in extending the class of domains to which the methods apply.  
As examples of new approaches, witness a series of joint papers with J. W. Helton \cite{bh} in the 1980s on the application of a novel Lax-Beurling theorem in $H^2$ spaces over Krein spaces, and a little later, on the exploitation of $J$-inner-outer factorization of operator-valued analytic functions on the disc.   More recently Ball has been active in the extension of operator-theoretic methods to multivariable, time-varying, nonlinear and noncommutative variants of classical problems \cite{ballHorst2,BMV,BallHuaman}.  It remains a worthwhile goal to explore the reach of operator-theoretic methods in function theory, and in this paper we apply them to the {\em tetrablock},  the domain in $\c^3$ defined by
\beq\label{defE}
\mathcal{E}=\{(x_1,x_2,x_3)\in\mathbb{C}^3:1-x_1z-x_2w+x_3zw\neq 0 \text{ for all }z,w\in \ov{\d}\}.
\eeq

Operator-theoretic techniques can be applied to many problems in analysis.  One that is relevant to this paper is the problem of analytic interpolation:

\em  Given distinct points $\la_1,\dots,\la_N$ in a domain $D$ in some $\c^d$ and target points $w_1,\dots,w_N$ in a subset $E$ of some Banach space $X$, determine whether there exists an analytic map $F:D\to E$ such that $F(\la_j)=w_j$ for $j=1,\dots,N$.

\rm
We shall call such a problem a {\em finite interpolation problem for $\hol(D,E)$}, the latter symbol denoting the set of holomorphic maps from $D$ to $E$.

The exemplar for solutions of finite interpolation problems of this type is the above-mentioned theorem of Pick \cite{Pick}, which provides an elegant criterion for the existence of an interpolating function $F$ in the case that $D=\d$ and $E=\overline{\d}$.
Among the many papers in the literature stemming from Pick's theorem, we mention a relevant paper of Ball and Bolotnikov \cite{bb}, which extends Pick's theorem (and related results) to the case that the target set $E$ has the form 
\beq\label{bbset}
 E=\{z\in\c^d: \|P(z)\| \leq 1\}
\eeq
 where $P$ is a matrix-valued polynomial in $d$ variables and $\|\cdot\|$ is the operator norm with respect to the Euclidean norms.  It seems to be difficult to extend established operator-theoretic methods much beyond this class of target sets while preserving the concreteness of Pick's original criterion, although there are extensions that make use of more abstract notions \cite{DM,BallHuaman}.

In this paper we extend methods from \cite{AY4,BLY} to study the analytic interpolation problem in which the domain $D$ is $\d$ and the target set $E$  is the closure $\ov{\cale}$ of the tetrablock  in $\c^3$.  The closed tetrablock, being the closure of $\cale$, is given by
\beq\label{defEbar}
\ov{\cale}=\{(x_1,x_2,x_3)\in\mathbb{C}^3:1-x_1z-x_2w+x_3zw\neq 0  \text{ for all }z,w\in \mathbb{D}\}.
\eeq
$\ov{\cale}$ is not of the form \eqref{bbset}, and so its function theory is not accessible to the results of \cite{bb}.
$\ov{\cale}$ is the closure of a domain which is both inhomogeneous and non-convex, and so it is to be expected that its function theory should be more complicated in some respects than that of the classical domains.
We derive a criterion for solvability (Theorem \ref{tetra_star2}) of the finite interpolation problem for $\hol(\d,\ov{\cale})$, which while concrete and potentially capable of numerical checking, is not verifiable in rational arithmetic (unlike Pick's criterion).   The original motivation for the study of the tetrablock was its connection with a special case of the $\mu$-synthesis problem, which arises in the theory of robust control.  $H^\infty$ control is another topic on which  Ball has published widely (for example, \cite{ballHorst1,ballHorst2}).  We explain the connection between the tetrablock and a case of the $\mu$-synthesis problem in Section \ref{mu} below.

In addition to a criterion for interpolation we also prove two further results in the function theory associated with $\cale$.  The first is a realization formula for analytic maps from $\d$ to $\ov{\cale}$  (Theorem \ref{realn2})  and the second is a solvability criterion for a certain interpolation problem for matrix-valued analytic functions on $\d$  which is a case of the $\mu$-synthesis problem (Theorem \ref{main-theorem}).

\section{The tetrablock}\label{tetrablock}
The complex geometry, function theory and operator theory of $\cale$
have attracted much attention over the past 10 years, for example \cite{edizwontetra,tetrabhatt,BLY,kos,You08,jp,NTT,pal}.
Although $\cale$ was first studied because of its application to a problem in control theory, it has turned out to be interesting to specialists in several complex variables and operator theory.

We shall require some elementary properties of the tetrablock. 
The following function plays an important role \cite{AWY07}. 
\begin{definition}\label{defPsi}
The rational function $\Psi$
 is defined for $(z,x_1,x_2,x_3) \in \C^4$ such that  $x_2z\neq 1$  by 
\[
\Psi(z,x_1,x_2,x_3)=\frac{x_3z-x_1}{x_2z-1}.
\] 
\end{definition}
$\Psi$ is analytic on the complement of the variety
$x_2z=1$ in $\c^4$. Note that, for $x \in \C^3$ such that 
$x_1x_2=x_3$, the function  $\Psi(\cdot,x)$ is constant and equal to $x_1$. 

Numerous characterizations of the closed tetrablock are given in \cite[Theorem 2.4]{AWY07}. Here are four of them.
\begin{proposition}\label{cond_Ebar}
Let $x=(x_1,x_2,x_3)\in\mathbb{C}^3$. The following are equivalent.
\begin{enumerate}
\item $ x\in \ov{\cale}$;
\item $ |\Psi(z,x)|\leq 1$ for all $z\in\mathbb{D}$ and if $x_1x_2=x_3$  then, in addition, $|x_2|\leq 1$;
\item $ |x_2-\overline{x_1}x_3|+|x_1x_2-x_3|\leq1-|x_1|^2$ and if $x_1x_2=x_3$ then, in addition, $|x_2|\leq1$;
\item $ |x_1-\overline{x_2}x_3|+|x_1x_2-x_3|\leq1-|x_2|^2$ and if $x_1x_2=x_3$ then, in addition, $|x_1|\leq1$;
\item there is a $ 2\times2$  matrix $A=[a_{ij}]_{i,j=1}^2$ such that $\|A\|\leq 1 $ and $x=(a_{11},a_{22},\det{A})$.
\end{enumerate}
\end{proposition}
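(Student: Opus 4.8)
The plan is to establish the cycle $(5)\Rightarrow(2)$, $(2)\Leftrightarrow(1)$, $(2)\Leftrightarrow(3)$, $(1)\Leftrightarrow(4)$, and finally $(3)\,\&\,(4)\Rightarrow(5)$; since $(2)$ gets shown equivalent to each of $(1),(3),(4)$, the last implication closes the loop and delivers the full equivalence. Two algebraic facts will drive everything. The first is the transfer-function identity: if $a_{11}=x_1$, $a_{22}=x_2$ and $a_{12}a_{21}=x_1x_2-x_3$, then
\[
\Psi(z,x)=\frac{x_1-x_3z}{1-x_2z}=a_{11}+a_{12}\,z\,(1-a_{22}z)^{-1}a_{21},
\]
so that $\Psi(\cdot,x)$ is exactly the transfer function of the colligation built from $A=[a_{ij}]$. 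The second is the elementary identity
\[
|x_2-\ov{x_1}x_3|^2-|x_1x_2-x_3|^2=(1-|x_1|^2)(|x_2|^2-|x_3|^2),
\]
obtained by expanding both moduli.

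For $(1)\Leftrightarrow(2)$ I would fix $w\in\d$ and read $1-x_1z-x_2w+x_3zw$ as an affine function of $z$; whenever $x_1-x_3w\neq0$ its unique zero is $z=1/\Psi(w,x)$, and this zero lies outside $\d$ precisely when $|\Psi(w,x)|\le1$. Letting $w$ range over $\d$ yields the main clause of $(2)$. The only way the polynomial can vanish on $\d\times\d$ without producing such a zero is the degenerate coincidence $x_1-x_3w=0=1-x_2w$, which forces $x_1x_2=x_3$ and $w=1/x_2\in\d$, i.e.\ $|x_2|>1$. Since $x_1x_2=x_3$ makes the polynomial factor as $(1-x_1z)(1-x_2w)$ and makes $\Psi(\cdot,x)\equiv x_1$ constant, this degenerate branch is exactly what the side condition ``if $x_1x_2=x_3$ then $|x_2|\le1$'' records.

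For $(2)\Leftrightarrow(3)$ I would note that in the non-degenerate case $\Psi(\cdot,x)$ is a genuine M\"obius map, so $|\Psi|\le1$ on $\d$ is equivalent to nonnegativity on $\d$ of
\[
g(z)=|x_2z-1|^2-|x_3z-x_1|^2=(|x_2|^2-|x_3|^2)|z|^2-2\re\big((x_2-\ov{x_1}x_3)z\big)+(1-|x_1|^2);
\]
minimising over each circle $|z|=r$ turns this into nonnegativity on $[0,1]$ of a real quadratic in $r$ with coefficients $|x_2|^2-|x_3|^2$, $|x_2-\ov{x_1}x_3|$ and $1-|x_1|^2$. Feeding in the identity above, its value at $r=1$ factors as $(P-u-v)(P-u+v)/P$ with $u=|x_2-\ov{x_1}x_3|$, $v=|x_1x_2-x_3|$, $P=1-|x_1|^2$, and a short discussion of the vertex location shows the quadratic is nonnegative on $[0,1]$ exactly when $u+v\le P$, which is $(3)$. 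Condition $(4)$ is $(3)$ applied to the swapped tuple $(x_2,x_1,x_3)$, and membership in $\ov{\cale}$ is invariant under $x_1\leftrightarrow x_2$ because the defining polynomial is symmetric under $(z,x_1)\leftrightarrow(w,x_2)$; hence $(1)\Leftrightarrow(4)$.

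The implication $(5)\Rightarrow(2)$ is then immediate: contractivity of $A=[a_{ij}]$ is preserved under the unitary row/column swap, so $\Psi(\cdot,x)$ is the transfer function of a contractive colligation and maps $\d$ into $\ov{\d}$, while if $x_1x_2=x_3$ then $a_{12}a_{21}=0$ and $|x_2|=|a_{22}|\le\|A\|\le1$ supplies the side condition. The hard part is the converse realization $(3)\,\&\,(4)\Rightarrow(5)$: I would take $A=\bigl[\begin{smallmatrix}x_1 & a_{12}\\ a_{21} & x_2\end{smallmatrix}\bigr]$ with $a_{12}a_{21}=x_1x_2-x_3$ and $|a_{12}|=|a_{21}|=|x_1x_2-x_3|^{1/2}$, and verify $\|A\|\le1$ through the two scalar conditions for $I-A^{\ast}A\succeq0$, namely $\det(I-A^{\ast}A)\ge0$ and $\tr(I-A^{\ast}A)\ge0$. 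The determinant condition reduces, via the identity, to $(P-u-v)(P-u+v)\ge0$, which follows from $(3)$; the trace condition $|x_1|^2+|x_2|^2+2v\le2$ follows by adding $v\le1-|x_1|^2$ from $(3)$ to $v\le1-|x_2|^2$ from $(4)$. I expect this realization step --- producing the contraction, checking both positivity inequalities, and matching the degenerate case $x_1x_2=x_3$ to the diagonal matrix $\diag(x_1,x_2)$ --- to be the main obstacle, since it is the one place where the two linear inequalities $(3)$ and $(4)$ must be invoked in tandem.
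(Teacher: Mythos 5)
The first thing to say is that the paper itself contains \emph{no proof} of this proposition: it is imported verbatim from \cite[Theorem 2.4]{AWY07} (``Numerous characterizations of the closed tetrablock are given in \dots''), so there is no in-paper argument to compare yours against. Judged on its own merits, your outline is essentially a reconstruction of the argument in the cited source, and its skeleton is sound: the two driving identities are correct (I checked both: $\Psi(z,x)=a_{11}+a_{12}z(1-a_{22}z)^{-1}a_{21}$ whenever $a_{11}=x_1$, $a_{22}=x_2$, $a_{12}a_{21}=x_1x_2-x_3$; and $|x_2-\ov{x_1}x_3|^2-|x_1x_2-x_3|^2=(1-|x_1|^2)(|x_2|^2-|x_3|^2)$); the implication $(5)\Rightarrow(2)$ via the norm-preserving swap $A\mapsto JAJ$ and the easy half of the realization theorem is right; the trace/determinant test for $I-A^*A\succeq 0$ with $|a_{12}|=|a_{21}|=|x_1x_2-x_3|^{1/2}$ is the correct device for $(3)\,\&\,(4)\Rightarrow(5)$; and the cycle you set up does close logically.

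There are, however, three places where the written argument has holes that a full proof must fill. (i) In $(1)\Leftrightarrow(2)$ you miss the sub-case $1-x_2w=0$, $x_1-x_3w\neq 0$ (equivalently $|x_2|>1$ and $x_1x_2\neq x_3$): there the zero in $z$ is $z=0\in\d$ and $\Psi(w,x)$ is \emph{undefined}, so your claim that the only way to vanish without producing a zero $1/\Psi(w,x)$ is the double degeneracy is false as stated. The repair is easy --- $\Psi$ then has a genuine pole at $1/x_2\in\d$, so $|\Psi|>1$ at nearby points and clause $(2)$ fails anyway, matching the failure of $(1)$ --- but it must be said. (ii) Your determinant computation carries a sign slip: with your $u,v,P$ one gets $\det(I-A^*A)=\bigl((P-v)^2-u^2\bigr)/P=(P-u-v)(P+u-v)/P$, not $(P-u-v)(P-u+v)/P$; the latter is the factorization of $q(1)$, which you evidently recycled. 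The slip is harmless, since under $(3)$ all of $P-u-v$, $P+u-v$, $P-u+v$ are nonnegative, so the implication you want survives. (iii) $(2)\Leftrightarrow(3)$ is argued only in the non-degenerate case $x_1x_2\neq x_3$ with $|x_1|<1$ implicit; the degenerate case (where $\Psi\equiv x_1$ and both sides reduce to $|x_1|\le 1$ together with $|x_2|\le 1$) and the boundary case $|x_1|=1$ need separate, if short, checks. Moreover your ``short discussion of the vertex location'' is not free: for an upward-opening parabola with vertex in $[0,1]$ one needs the identity a second time to see that $u+v\le P$ forces $v(u+v)\le 0$, hence $v=0$ and minimum value exactly $0$. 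None of these is a conceptual gap --- the approach is the standard and correct one --- but as written the proof is a sketch whose missing cases are precisely where the side conditions ``if $x_1x_2=x_3$ then $|x_2|\le 1$ (resp.\ $|x_1|\le 1$)'' earn their keep.
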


We denote by  $\mathcal{A}(\mathcal{E})$ the algebra of continuous functions
on $\ov{\cale}$ that are analytic on $\mathcal{E}$. 
By  \cite[Theorem 2.9]{AWY07}, $\ov{\cale}$ is polynomially convex, and so  the maximal ideal space of $\cala(\cale)$ is 
$\ov{\cale}$.  Hence the Shilov boundary of $\cala(\cale)$ is a subset of $\ov{\cale}$, called
the distinguished boundary of $\cale$ and denoted by $b\cale$.
The following alternative descriptions of $b\mathcal{E}$ (among others) are given in \cite[Theorem 7.1]{AWY07}.
\begin{proposition}\label{conditionsforbE}
Let $x=(x_1,x_2,x_3)\in\mathbb{C}^3$. The following are equivalent.
\begin{enumerate}
\item $ x\in b\mathcal{E}$;
\item $ x\in \overline{\mathcal{E}}\;\text{ and }\;|x_3|=1$;
\item $ x_1=\overline{x_2}x_3,\; |x_3|=1\;\text{ and }\;|x_2|\leq 1$.
\end{enumerate}
\end{proposition}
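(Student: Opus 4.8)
The plan is to prove the three equivalences in the order $(2)\Leftrightarrow(3)$, then $(1)\Rightarrow(2)$, then $(2)\Rightarrow(1)$.

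\emph{The step $(2)\Leftrightarrow(3)$} is elementary and uses only the third and fourth characterizations of $\ov{\cale}$ in Proposition \ref{cond_Ebar}. Assume $x\in\ov{\cale}$ and $|x_3|=1$. Since $|x_3|=1$ one has $|x_2-\ov{x_1}x_3|=|x_1-\ov{x_2}x_3|$ (multiply by $\ov{x_3}$ and conjugate), so, writing $a=|x_1-\ov{x_2}x_3|$ and $b=|x_1x_2-x_3|$, the fourth characterization reads $a+b\le 1-|x_2|^2$ (and the third reads $a+b\le 1-|x_1|^2$). Feeding the identity
\[
x_1x_2-x_3=x_2\,(x_1-\ov{x_2}x_3)+(|x_2|^2-1)x_3
\]
into the reverse triangle inequality gives $b\ge(1-|x_2|^2)-|x_2|\,a$, and combining this with $a+b\le 1-|x_2|^2$ forces $a(1-|x_2|)\le 0$, hence $a=0$; that is $x_1=\ov{x_2}x_3$, while $|x_2|\le 1$ is immediate. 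Conversely, if $x_1=\ov{x_2}x_3$, $|x_3|=1$ and $|x_2|\le 1$, then $x_1-\ov{x_2}x_3=0$ and $|x_1x_2-x_3|=1-|x_2|^2$, so the fourth characterization holds with equality and the side condition in the case $x_1x_2=x_3$ is checked directly; thus $x\in\ov{\cale}$.

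\emph{For $(1)\Rightarrow(2)$} I would show that $T:=\{x\in\ov{\cale}:|x_3|=1\}$ is a closed boundary for $\cala(\cale)$; since $b\cale$ is contained in every closed boundary, this yields $b\cale\subseteq T$. (Here $|x_3|\le 1$ on $\ov{\cale}$ by the fifth characterization, as $x_3=\det A$ with $\|A\|\le 1$, so $T$ is closed.) To see that $T$ is a boundary I would construct through each $x^0\in\ov{\cale}$ an analytic disc landing in $T$ on the circle: lift $x^0$ to a contraction $A^0$ as in the fifth characterization, take a singular value decomposition $A^0=U\,\mathrm{diag}(s_1,s_2)\,V^\ast$, and set $A(\zeta)=U\,\mathrm{diag}(b_1(\zeta),b_2(\zeta))\,V^\ast=[a_{ij}(\zeta)]$ with $b_j(\zeta)=(s_j+\zeta)/(1+s_j\zeta)$. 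Then $A(\cdot)$ is a rational inner $2\times2$ function with $A(0)=A^0$, so $h(\zeta):=(a_{11}(\zeta),a_{22}(\zeta),\det A(\zeta))$ maps $\ov{\D}$ into $\ov{\cale}$, satisfies $h(0)=x^0$, and maps $\T$ into $T$ (since $A(\zeta)$ is unitary there, $|\det A(\zeta)|=1$). For $f\in\cala(\cale)$ the maximum principle applied to $f\circ h$ gives $|f(x^0)|\le\max_{\T}|f\circ h|\le\max_T|f|$, and as $x^0$ is arbitrary, $\max_{\ov{\cale}}|f|=\max_T|f|$.

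\emph{The step $(2)\Rightarrow(1)$ is the heart of the matter}: I would show each $x^0\in T$ is a peak point of $\cala(\cale)$ and hence lies in $b\cale$. By $(2)\Leftrightarrow(3)$ we may write $x^0=(\ov{x_2^0}x_3^0,x_2^0,x_3^0)$ with $|x_3^0|=1$, $|x_2^0|\le 1$, and after applying the rotation automorphism $(x_1,x_2,x_3)\mapsto(\ov{x_3^0}x_1,x_2,\ov{x_3^0}x_3)$ (which preserves peak points) we may assume $x_3^0=1$. When $|x_2^0|=1$ all three coordinates are unimodular and $x_1^0x_2^0=x_3^0$; here $\tfrac14(1+\ov{x_1^0}x_1)(1+\ov{x_2^0}x_2)$ peaks at $x^0$, the third characterization pinning down $x_3$ once $x_1,x_2$ are unimodular. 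The range $0\le|x_2^0|<1$ is the obstacle. Writing $a=x_2^0$, I would use
\[
v(x)=\frac{(1-|a|^2)(x_3-x_1x_2)}{(1-\ov a\,x_2)(1-a\,x_1)},
\]
which lies in $\cala(\cale)$ since its denominators do not vanish on $\ov{\cale}$ (there $|x_1|,|x_2|\le1$) and satisfies $v(x^0)=1$. The crux is $\|v\|\le 1$: lifting $x$ to a contraction $A$ gives $x_3-x_1x_2=-a_{12}a_{21}$, and the bound follows by multiplying $|a_{12}a_{21}|\le\sqrt{(1-|a_{11}|^2)(1-|a_{22}|^2)}$ with $\sqrt{(1-|a|^2)(1-|a_{11}|^2)}\le|1-a\,a_{11}|$ and $\sqrt{(1-|a|^2)(1-|a_{22}|^2)}\le|1-\ov a\,a_{22}|$, the last two following from identities of the form $|1-\ov\alpha\beta|^2-(1-|\alpha|^2)(1-|\beta|^2)=|\alpha-\beta|^2\ge0$. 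Tracing the equality case shows $|v(x)|=1$ forces $a_{11}=\ov a$, $a_{22}=a$ and the rows of $A$ to be unit vectors, whence $A$ is unitary with $\det A=1$, i.e.\ $x=x^0$; so $v$ peaks at $x^0$ when $0<|a|<1$, while for $a=0$ the function $\tfrac12(x_3-x_1x_2)(1+x_3)$ does the peaking. Thus every point of $T$ is a peak point, so $T\subseteq b\cale$; with $b\cale\subseteq T$ and $(2)\Leftrightarrow(3)$ this completes the proof. I expect the verification that $\|v\|\le 1$ with equality only at $x^0$—that is, the passage through the contraction lift $A$—to be the main technical point.
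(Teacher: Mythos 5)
The paper itself contains no proof of this proposition: it is quoted verbatim from \cite[Theorem 7.1]{AWY07}, so your argument can only be compared with that source, whose general strategy (identify $T=\set{x\in\ov{\cale}}{|x_3|=1}$ as a closed boundary via analytic discs through contractive lifts, then show every point of $T$ is a peak point) yours closely parallels. Your computational core is sound: the $(2)\Leftrightarrow(3)$ manipulation is correct (modulo the half-line subcase $|x_2|=1$, where $a=0$ follows directly from $a+b\le 1-|x_2|^2=0$ rather than from $a(1-|x_2|)\le 0$); the three peaking functions and their equality analyses through the lift $A$ check out, including the case split at $a=0$, which is genuinely needed since your $v$ does not peak there; and the step from ``columns of $A$ are unit vectors'' to ``$A$ unitary'' (positive semidefinite $I-A^*A$ with zero diagonal is zero) and then to $\det A=1$ (orthogonality of columns gives $a_{12}=-\ov{a_{21}}$ when $a\neq 0$) is correct, though you should write out that last computation since it silently uses $a\neq 0$.

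The one genuine gap is in $(1)\Rightarrow(2)$: you apply the maximum principle to $f\circ h$ for an arbitrary $f\in\cala(\cale)$, but $f$ is analytic only on the open set $\cale$, and your disc $h$ can map $\D$ entirely into the topological boundary $\partial\cale$, where $f$ is merely continuous. This actually happens: for $x^0=(1,0,0)$ your construction gives $A(\zeta)=\mathrm{diag}(1,\zeta)$, hence $h(\zeta)=(1,\zeta,\zeta)$, and
\begin{equation*}
1-x_1z-x_2w+x_3zw=(1-z)(1-\zeta w)
\end{equation*}
vanishes at $z=1$, so $h(\D)\subset\partial\cale$ and the analyticity of $f\circ h$ is unjustified; more generally this occurs whenever the lift $A^0$ has a unit singular value. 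The standard patch is available from the paper's own toolkit: since membership of $\ov{\cale}$ only requires nonvanishing of $1-x_1z-x_2w+x_3zw$ for $z,w\in\D$, the scaling $\rho_r(x)=(rx_1,rx_2,r^2x_3)$ maps $\ov{\cale}$ into $\cale$ for $0<r<1$; thus $f\circ\rho_r\circ h$ is analytic on $\D$ and continuous on $\ov{\D}$, the maximum principle applies to it, and letting $r\to 1^-$ (using uniform continuity of $f$ on the compact set $\ov{\cale}$) yields $|f(x^0)|\le\max_T|f|$. Equivalently, one can first use $\rho_r$ together with Oka--Weil and the polynomial convexity of $\ov{\cale}$ quoted in Section \ref{tetrablock} to show polynomials are dense in $\cala(\cale)$, and verify the boundary property for polynomials, for which $p\circ h$ is genuinely analytic. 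With this repair your proof is complete.
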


By \cite[Corollary 7.2]{AWY07},
$b\mathcal{E}$ is homeomorphic to $\overline{\mathbb{D}}\times\mathbb{T}$.
We denote the unit circle by $\mathbb{T}$.  

\begin{definition} \label{E-in-funct}  
An  {\em $\mathcal{E}$-inner function} is an analytic function $\phi : \D \to \ov{\cale}$ such that the radial limit
\begin{equation}\label{radialE}
\lim_{r \to 1-} \phi(r \lambda) \mbox{ exists and belongs to } b\mathcal{E}
\end{equation}
for almost all $\lambda \in \T$ with respect to Lebesgue measure.
\end{definition}

By Fatou's Theorem, the radial limit (\ref{radialE}) exists for almost all 
 $\lambda \in \T$ with respect to Lebesgue measure. 
Note that, for an $\mathcal{E}$-inner function  $\phi= (\phi_1, \phi_2, \phi_3) : \D \to \ov{\cale}$,  $\phi_3$ is an inner function on $\mathbb{D}$ in the classical sense.

A finite interpolation problem for $\hol(\D,\ov{\cale})$ has a solution if and only if it has a rational $\mathcal{E}$-inner solution -- see \cite[Theorem 8.1]{BLY}. 



\section{Realization formulae and the tetrablock}\label{realization}

A {\em realization formula} for a class of functions is an expression for a general function in the class in terms of operators on Hilbert space.
We shall give a realization formula for the class $\hol(\d,\ov{\cale})$.

The {\em Schur class} of operator-valued or matricial functions (of a given type) is the set of 
analytic operator- or
matrix-valued functions  $F$ on $\D$ bounded by $1$ in norm, that is,
satisfying 
$$
|| F(\lambda) || \leq 1 \qquad \text{ for all } \lambda \in \D,
$$
where $||\cdot||$ denotes the operator norm.  In particular, the space of analytic $2\times 2$ matrix functions $F$ on $\mathbb{D}$ such that $\|F\|\leq1$ for all $\lambda\in\mathbb{D}$ is called the {\em $2\times 2$ Schur class} and is denoted by $\mathcal{S}^{2\times 2}.$

The best-known realization theorem is for the Schur class (see for example \cite[Chapter VI.3]{NF}). 
If, for some Hilbert spaces $H, U$ and $Y$,
\begin{equation}
\label{coeff}
\left[\begin{array}{cc} A&B\\
C&D\end{array}\right] : H \oplus U \rightarrow H \oplus Y
\end{equation}
is a contractive operator,
then, for any $z\in \mathbb{D}$,
$$
\|D+Cz(1-Az)^{-1}B\|\leq 1.
$$
Conversely, any function in the Schur class (of functions from $\D$ to the
space of bounded linear operators from $U$ to $Y$)
has such a representation, in which the block operator matrix
(\ref{coeff}) is a unitary operator from $H \oplus U$ to $H \oplus Y$.

It will be convenient to use some
standard engineering notation.  If $H, U$ and $Y$  are Hilbert
spaces and
$$
\begin{array}{cccc}
A: H\to H, &&& B:U\to H,\\ \\
C: H\to Y, &&& D: U \to Y\end{array}$$
are bounded linear operators, then we define 
$$
\left[\begin{tabular}{c|c}
   $A$ \hspace{.05in}& $B$\\ \hline
$C$ \hspace{.05in}&$D$
\end{tabular}\right] 
$$
to be the operator-valued function
$$
z \mapsto D + Cz(1-Az)^{-1}B: U \to Y
$$
defined for all $z\in\c$ such that $1-Az$ is invertible.

 The following result \cite[Theorem 7.1]{BLY} relates $\hol(\mathbb{D},\ov{\cale})$ to $\mathcal{S}^{2\times 2}$.

\begin{proposition}\label{tetra-intBLY}
Let $x\in \hol(\mathbb{D},\ov{\cale}).$ There exists a unique function 
\[
F=[F_{ij}]_1^2\in\mathcal{S}^{2\times 2}
\] 
such that 
\beq\label{xandF}
x=(F_{11},F_{22},\det F),
\eeq
where $ (\det F)(z)$ denotes the determinant of the matrix $\det F(z)$ for $  z \in \mathbb{D} $,
 and
 \[
\abs{F_{12}}=\abs{F_{21}} \;a.e.\;\text{on}\;\mathbb{T},\; F_{21}\; \text{is either 0 or outer, and }\;F_{21}(0)\geq0.
\]
 Moreover, for all $\mu,\lambda\in\mathbb{D}$ and all $w,z\in\mathbb{C}$ such that 
\[
1-F_{22}(\mu)w\neq0\;\text{and}\;1-F_{22}(\lambda)z\neq 0,
\]  
\begin{align}
1-\overline{\Psi(w,x(\mu))}\Psi(z,x(\lambda))&=
(1-\overline{w}z)\overline{\gamma(\mu,w)}\gamma(\lambda,z)\nonumber\\
                                             &+\eta(\mu,w)^*(I-F(\mu)^*F(\lambda))\eta(\lambda,z),
\end{align}
where 
\begin{equation}
\gamma(\lambda,z):=(1-F_{22}(\lambda)z)^{-1}F_{21}(\lambda)\;\text{and}\;\eta(\lambda,z):=\left[\begin{array}{c}
1\\z\gamma(\lambda,z)
\end{array}\right].
\end{equation}

Conversely, if $F\in \mathcal{S}^{2\times 2}$ then
\[
(F_{11},F_{22},\det F) \in \hol(\d,\ov{\cale}).
\]
\end{proposition}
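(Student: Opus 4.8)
The plan is to build the matrix function $F$ by hand from $x=(x_1,x_2,x_3)$, verify it is contractive using the geometry of $\ov{\cale}$, and then read off the kernel identity from a routine lurking-isometry computation; the converse is immediate. The guiding observation is the transfer-function identity
\[
\Psi(z,x(\lambda))=F_{11}(\lambda)+F_{12}(\lambda)\,z\,(1-F_{22}(\lambda)z)^{-1}F_{21}(\lambda),
\]
valid whenever $x=(F_{11},F_{22},\det F)$, which one checks by clearing denominators: the right-hand side equals $(x_1-x_3z)/(1-x_2z)=\Psi(z,x)$. Thus, for fixed $\lambda$, $\Psi(\cdot,x(\lambda))$ is exactly the transfer function of the colligation obtained from $F(\lambda)$ by interchanging the two coordinate summands, and contractivity of $F(\lambda)$ will be the engine behind everything.

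For existence and uniqueness of $F$, set $g:=x_1x_2-x_3$, which is bounded and analytic on $\D$ because $\ov{\cale}$ is bounded. Any admissible $F$ must have $F_{11}=x_1$, $F_{22}=x_2$ and $F_{12}F_{21}=g$, so the only freedom is in factoring $g$. If $g\equiv0$ the normalisation forces $F_{21}=0$, and then $|F_{12}|=|F_{21}|=0$ a.e.\ on $\T$ forces $F_{12}\equiv0$, giving $F=\operatorname{diag}(x_1,x_2)$; here $\|F\|\le1$ follows from Proposition \ref{cond_Ebar}(2)--(4). If $g\not\equiv0$ then $\log|g|\in L^1(\T)$, so I would take $F_{21}$ to be the outer function with $|F_{21}|=|g|^{1/2}$ a.e.\ on $\T$, normalised so that $F_{21}(0)>0$, and set $F_{12}:=g/F_{21}$. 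An inner--outer decomposition of $g$ shows $F_{12}\in H^\infty$ with $|F_{12}|=|g|^{1/2}=|F_{21}|$ a.e.\ on $\T$, so all the stated side conditions hold. Uniqueness is then immediate: two admissible choices of $F_{21}$ are outer, have equal moduli on $\T$ and the same nonnegative value at $0$, hence coincide, whereupon $F_{12}=g/F_{21}$ is determined.

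The main obstacle is to prove $\|F(\lambda)\|\le1$ on $\D$. Since $F\in H^\infty$ (each entry is bounded), it suffices to prove the bound for almost every boundary point and then invoke the maximum principle $\sup_{\D}\|F\|=\operatorname{ess\,sup}_{\T}\|F\|$ for matricial $H^\infty$. Fix such a $\lambda\in\T$, where $x(\lambda)\in\ov{\cale}$. By Proposition \ref{cond_Ebar}(5) there is \emph{some} matrix $A$ with $\|A\|\le1$, diagonal $(x_1(\lambda),x_2(\lambda))$ and $\det A=x_3(\lambda)$; its off-diagonal entries satisfy $A_{12}A_{21}=g(\lambda)$. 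Now among all $2\times2$ matrices with prescribed diagonal $(x_1,x_2)$ and prescribed off-diagonal product $g$, the determinant is fixed, so $\det(F^*F)=|x_3|^2$ is constant, whereas $\operatorname{tr}(F^*F)=|x_1|^2+|x_2|^2+|F_{12}|^2+|F_{21}|^2$ is minimised precisely when $|F_{12}|=|F_{21}|$, by the arithmetic--geometric mean inequality $|F_{12}|^2+|F_{21}|^2\ge2|g|$. Since the two eigenvalues of $F^*F$ have fixed product and a sum that is then smallest, the larger eigenvalue $\|F\|^2$ is minimised by the balanced choice. As $A$ already achieves norm $\le1$, our balanced $F(\lambda)$ satisfies $\|F(\lambda)\|\le\|A\|\le1$. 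This is the crux; the rest is bookkeeping.

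Finally, the kernel identity and the converse. With $F$ in hand, write $U(\lambda)$ for the contraction obtained from $F(\lambda)$ by swapping coordinates and put $v_{\lambda,z}=\bigl(z\gamma(\lambda,z),\,1\bigr)^{\!\top}$, so that $U(\lambda)v_{\lambda,z}=\bigl(\gamma(\lambda,z),\,\Psi(z,x(\lambda))\bigr)^{\!\top}$ by the transfer-function identity. Expanding $v_{\mu,w}^{*}\bigl(I-U(\mu)^{*}U(\lambda)\bigr)v_{\lambda,z}$ in two ways and undoing the coordinate swap (which conjugates $U$ back to $F$ and $v$ back to $\eta$) yields exactly the displayed formula for $1-\ov{\Psi(w,x(\mu))}\Psi(z,x(\lambda))$. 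The converse needs no work: if $F\in\mathcal{S}^{2\times2}$ then for each $\lambda$ the matrix $F(\lambda)$ is a contraction, so Proposition \ref{cond_Ebar}(5) gives $(F_{11},F_{22},\det F)(\lambda)\in\ov{\cale}$, and analyticity of the entries shows this map lies in $\hol(\D,\ov{\cale})$.
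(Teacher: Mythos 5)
Your proof is correct, and it is substantially more self-contained than the paper's own treatment: the paper does not actually prove this proposition, but quotes it as \cite[Theorem 7.1]{BLY} and then only sketches the construction of $F$ --- the very construction you use, namely $F_{11}=x_1$, $F_{22}=x_2$, the inner--outer factorization $x_1x_2-x_3=\phi\,\e^{C}$ and the balanced choice $F_{21}=\e^{C/2}$, $F_{12}=\phi\,\e^{C/2}$ --- while explicitly deferring the crux, Schur-class membership of $F$, to \cite{BLY}. You supply that crux with an elementary argument that does not appear in the paper: almost every radial limit $x(\lambda)$, $\lambda\in\T$, lies in the closed set $\ov{\cale}$; Proposition \ref{cond_Ebar}(5) then yields a contraction $A$ with the same diagonal and the same off-diagonal product $g(\lambda)$; and among all $2\times 2$ matrices with prescribed diagonal and prescribed off-diagonal product, the balanced ones minimize the operator norm, since $\det(M^*M)$ is constant on that class, $\tr(M^*M)$ is minimized exactly by balancing (AM--GM), and the top eigenvalue $\tfrac12\bigl(s+\sqrt{s^2-4p}\,\bigr)$ is increasing in $s=\tr(M^*M)$ for fixed $p=\det(M^*M)$; the matricial maximum principle then carries $\|F(\lambda)\|\le1$ from $\T$ into $\D$. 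Your uniqueness argument (an outer function is determined by its boundary modulus together with positivity at the origin) and your derivation of the ``Moreover'' identity from the transfer-function formula $\Psi(z,x(\lambda))=F_{11}+F_{12}z(1-F_{22}z)^{-1}F_{21}$, via the swapped colligation $U=JFJ$ and the standard expansion of $v_{\mu,w}^*(I-U(\mu)^*U(\lambda))v_{\lambda,z}$, are also correct; that identity, too, is only quoted in the paper from \cite{BLY}. In short, your route buys a complete proof readable without consulting \cite{BLY}, at the price of invoking standard $H^\infty$ boundary-value facts (Fatou's theorem, uniqueness of outer functions, the maximum principle for matrix-valued $H^\infty$), whereas the paper buys brevity by citation.
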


 Here is an outline of the construction of $F$ for a given $x\in\hol(\d,\ov{\cale})$.  Certainly $\abs{x_1(\lambda)},\;\abs{x_2(\lambda)}\leq 1$ for all $\lambda\in\mathbb{D}$.  If  $x_1x_2=x_3,$ then we may simply define $F$ by
\[
F=\left[\begin{array}{lr}
x_1&0\\0&x_2
\end{array}\right].
\]

In the case that $x_1x_2\neq x_3,$ the $H^\infty$ function $x_1x_2-x_3$ is nonzero and so it has an inner-outer factorization which can be written in the form 
\[
x_1x_2-x_3=\phi \e^C,
\] 
where $\phi$ is inner, $\e^C$ is outer and $\e^C(0)\geq 0.$  Let $F$ be defined by 
\[
F=\left[\begin{array}{lr}
x_1&\phi e^{\frac{1}{2}C}\\e^{\frac{1}{2}C}&x_2
\end{array}\right].
 \]
Then clearly
\[
\det F=x_1x_2-\phi e^C=x_1x_2-x_1x_2+x_3=x_3
\] 
and 
\[
\abs{F_{12}}=\e^{\re{\frac{1}{2}C}}=\abs{F_{21}}\;\text{a.e. on}\;\mathbb{T},\;F_{21}\;\text{is outer, and}\;F_{21}(0)\geq 0.
\]
The crux of the proof in \cite{BLY} is to show that $F$ is in the Schur class.
\vspace*{0.5cm}

We may combine Proposition \ref{tetra-intBLY} with the classical realization formula to obtain a realization for $\hol(\d,\ov{\cale})$.
\begin{theorem}\label{realn2}
A function 
$$
x=(x_1, x_2, x_3): \mathbb{D} \to \C^3
$$
maps $\D$ analytically into $\ov{\cale}$ 
if and only if there exist a Hilbert 
space $H$ and a unitary operator
\begin{equation}
\label{1.51}
\left[\begin{array}{cc} A&B\\ C&D\end{array}\right]: H \oplus
\mathbb{C}^2 \to H \oplus \mathbb{C}^2
\end{equation}
such that, for all $\lambda \in \D$,
\begin{equation}
\label{got1}
x_1(\lambda) = \left[\begin{tabular}{c|c}
   $A$ \hspace{.05in}& $B_1$\\ \hline
$C_1$ \hspace{.05in}&$D_{11}$
\end{tabular}\right] (\lambda)= D_{11} + C_1 \lambda (1-A \lambda)^{-1}B_1,
\end{equation}
\begin{equation}
\label{got2}
x_2(\lambda)=  \left[\begin{tabular}{c|c}
   $A$ \hspace{.05in}& $B_2$\\ \hline
$C_2$ \hspace{.05in}&$D_{22}$
\end{tabular}\right](\lambda) = D_{22} + C_2 \lambda (1-A \lambda)^{-1}B_2,
\end{equation}
and
\begin{equation}
\label{gotp}
x_3(\lambda)= \det \left[\begin{tabular}{c|c}
   A \hspace{.05in}& B\\ \hline
C \hspace{.05in}&D
\end{tabular}\right](\lambda) = \det[ D + C \lambda (1-A \lambda)^{-1}B],
\end{equation}
where  
\[
B=\bbm B_1 & B_2\ebm : \mathbb{C}^2 \to H, \quad
C=\bbm C_1\\C_2\ebm : H\to
\mathbb{C}^2
\mbox{ and }  D = \bbm D_{ij}\ebm_{i,j=1}^2.
\]
\end{theorem}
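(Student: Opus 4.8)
The plan is to derive Theorem \ref{realn2} by combining two results already in hand: Proposition \ref{tetra-intBLY}, which converts membership of $\hol(\D,\ov{\cale})$ into a statement about the $2\times 2$ Schur class $\mathcal{S}^{2\times 2}$, and the classical realization theorem for the Schur class recalled above. The point is that neither direction requires fresh analytic input; the whole argument consists of splicing these two representations together while tracking the $2\times 2$ block structure.

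For the forward implication, suppose $x=(x_1,x_2,x_3)$ maps $\D$ analytically into $\ov{\cale}$. First I would apply Proposition \ref{tetra-intBLY} to obtain the unique $F=[F_{ij}]_1^2\in\mathcal{S}^{2\times 2}$ with $x=(F_{11},F_{22},\det F)$. Since $F$ is a matricial Schur-class function with $U=Y=\C^2$, the classical realization theorem furnishes a Hilbert space $H$ and a \emph{unitary} operator as in \eqref{1.51} for which $F(\lambda)=D+C\lambda(1-A\lambda)^{-1}B$ for all $\lambda\in\D$. It then remains to read off the entries: writing $B=\bbm B_1&B_2\ebm$ and $C=\bbm C_1\\C_2\ebm$ as in the statement, the $(i,j)$ entry of the operator product $C\,\lambda(1-A\lambda)^{-1}B$ is exactly $C_i\,\lambda(1-A\lambda)^{-1}B_j$, so the diagonal entries of $F(\lambda)$ are precisely the scalar transfer functions in \eqref{got1} and \eqref{got2}, while $x_3=\det F$ is \eqref{gotp}.

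For the converse, suppose we are given the unitary operator of \eqref{1.51} together with \eqref{got1}--\eqref{gotp}, and set $F(\lambda):=D+C\lambda(1-A\lambda)^{-1}B$. A unitary operator is a contraction, so the classical realization theorem gives $F\in\mathcal{S}^{2\times 2}$, and the same block computation shows $F_{11}=x_1$, $F_{22}=x_2$ and $\det F=x_3$. The converse half of Proposition \ref{tetra-intBLY} then yields $x=(F_{11},F_{22},\det F)\in\hol(\D,\ov{\cale})$, as required.

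Because both ingredients are already established, I do not expect a substantial obstacle. The one step that genuinely needs care is the elementary observation that the engineering-notation transfer function respects the $2\times 2$ block decomposition, so that the scalar functions in \eqref{got1}--\eqref{got2} really are the diagonal entries of the matricial transfer function $F$. It is also worth flagging that $H$ and the colligation are far from unique --- only the Schur function $F$ associated to $x$ by Proposition \ref{tetra-intBLY} is unique --- and the theorem makes no uniqueness claim.
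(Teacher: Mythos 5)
Your proposal is correct and follows essentially the same route as the paper: apply Proposition \ref{tetra-intBLY} to pass between $x$ and a Schur-class function $F$, invoke the classical realization theorem for the unitary colligation, and track the $2\times 2$ block structure to identify \eqref{got1}--\eqref{gotp}. The only cosmetic difference is in the converse: you cite the converse half of Proposition \ref{tetra-intBLY}, while the paper applies Proposition \ref{cond_Ebar} pointwise to the values $\chi(\lambda)$, which amounts to the same thing.
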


\begin{proof}Let $x:\mathbb{D}\to \overline{\E}$ be analytic. By Proposition \ref{tetra-intBLY} there exists $F$ in
 the Schur class such that
$$
x = (F_{11},F_{22}, \det F).
$$
By the realization theorem for the Schur class there exist a Hilbert space $H$ and a
unitary operator $\left[\begin{array}{cc} A&B\\
C&D\end{array}\right]$ on $H\oplus \mathbb{C}^2$ such that, for
all $\lambda \in \mathbb{D}$,
\begin{eqnarray*}
F(\lambda) &=& \left[\begin{tabular}{c|c}
   $A$ \hspace{.05in}& $B$\\ \hline
$C$ \hspace{.05in}&$D$
\end{tabular}\right] (\lambda)\\ \\
&=& D + C\lambda(1-A\lambda)^{-1} B\\ \\
&=&\left[\begin{array}{cc}
   D_{11} &D_{12}\\
D_{21}&D_{22}
\end{array}\right] +
\left[\begin{array}{c}
   C_{1} \\
C_{2}
\end{array}\right] \lambda (1-A\lambda)^{-1} [B_1 \ B_2].
\end{eqnarray*}
Thus
\begin{eqnarray*}
F_{11} &=& \left[\begin{tabular}{c|c}
   $A$ \hspace{.05in}& $B_1$\\ \hline
$C_1$ \hspace{.05in}&$D_{11}$
\end{tabular}\right],\\
F_{22}&=& \left[\begin{tabular}{c|c}
   $A$ \hspace{.05in}& $B_2$\\ \hline
$C_2$ \hspace{.05in}&$D_{22}$
\end{tabular}\right],\\
\det F&=& \det \left[\begin{tabular}{c|c}
   A \hspace{.05in}& B\\ \hline
C \hspace{.05in}&D
\end{tabular}\right].
\end{eqnarray*}
and so equations  \eqref{got1}, \eqref{got2} and \eqref{gotp} hold. 

Conversely, let a Hilbert space $H$ and operators $A, B,  C$ and $D$ satisfy displayed formulae  \eqref{1.51} to \eqref{gotp}.  Then $x$ is clearly analytic in $\d$.  We must show that $x(\d)\subset \ov{\cale}$.

Let
$$
\chi= \left[\begin{tabular}{c|c}
   $A$ \hspace{.05in}&$B$\\ \hline
$C$ \hspace{.05in}&$D$
\end{tabular}\right] = [\chi_{ij}],
$$
so that $\|\chi\|_\infty \le 1$ by the realization theorem for the Schur class.
We have 
$$
\chi_{jj} =  \left[\begin{tabular}{c|c}
   $A$ \hspace{.05in}& $B_j$\\ \hline
$C_j$ \hspace{.05in}&$D_{jj}$
\end{tabular}\right],
\qquad j=1,2,
$$
and so
$$
x_1 = \chi_{11}, \quad \quad x_2 = \chi_{22} \qquad \mbox{ and }\quad x_3 = \det  \chi.
$$
Hence, for any $\la\in\d$, 
\[
x(\la)=(\chi_{11}(\la), \chi_{22}(\la),\det\chi(\la))
\]
where $\chi(\la)$ is a $2\times 2$ matrix of norm at most $1$.  Hence, 
 by  Proposition \ref{cond_Ebar}, for any $\la\in\d$,  $x(\la)\in\ov{\cale}$. Thus $x(\d)\subset \ov{\cale}$.
\end{proof}

\section{The finite interpolation problem for $\hol(\d,\ov{\cale})$}\label{NPforE}
In this section we prove the main theorem of the paper, a criterion for the solvability of the problem in the title of the section.

By {\em matricial Nevanlinna-Pick data} we mean a finite set $\lambda_1,
\dots, \lambda_n$ of distinct points in $\mathbb{D}$, where $n\in
\mathbb{N}$, and an equal number of  ``target matrices" $W_1,
\dots, W_n$, of type (say) $m\times p$.  We write these data
\begin{equation}\label{NPdata}
\lambda_k \mapsto W_k,\qquad k=1,\dots, n.
\end{equation}
We say that these data are {\em solvable} if there exists a
function $F$ in the Schur class such
that $F(\lambda_k) = W_k, \  k=1,\dots, n$.  
The {\em Nevanlinna-Pick problem} is to ascertain whether prescribed data are solvable.
By the classical theorem of
Pick, or more precisely its extension to matricial
data (for example, \cite{bgr}), the Nevanlinna-Pick problem with data
(\ref{NPdata}) is solvable if and only if the ``Pick matrix"
$$
\left[\frac{I - W^*_k W_\ell}{1 - \overline \lambda_k
\lambda_\ell}\right]^n_{k, \ell=1}
$$
is positive.

The following is our criterion for the solvability of an interpolation problem for $\hol(\d,\ov{\cale})$.  It relates the problem  to a family of classical matricial Nevanlinna-Pick problems.
\begin{theorem}\label{tetra_star2}
Let $\lambda_1,\dots,\lambda_n$ be distinct points in $\mathbb{D}$ and let $(x_1^k,x_2^k,x_3^k)\in\cale$ for $k=1,\dots,n.$ The following statements are equivalent. 
\begin{enumerate}
\item[(1)] There exists an analytic function $x:\mathbb{D}\to \ov{\cale}$ such that 
\begin{equation}\label{tetra-eqn1}
x(\lambda_k)=(x_1^k,x_2^k,x_3^k),\quad k=1,\dots,n;
\end{equation}
\item[(2)] there exist $b_k,c_k\in\mathbb{C}$ such that
\begin{equation}\label{tetra eqnsecond}
b_kc_k=x_1^kx_2^k-x_3^k,\qquad  k=1,\dots,n,
\end{equation} 
and the Nevanlinna-Pick problem with data
\begin{equation}\label{tetra eqn3}
\lambda_k\mapsto\left[\begin{array}{lr}
x_1^k&b_k\\c_k&x_2^k
\end{array}\right],\qquad  k=1,\dots,n,
\end{equation}
is solvable.
\end{enumerate}
\end{theorem}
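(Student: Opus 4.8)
The plan is to use Proposition \ref{tetra-intBLY} as a dictionary between $\hol(\d,\ov{\cale})$ and the $2\times 2$ Schur class, thereby converting the tetrablock interpolation problem into an ordinary matricial Nevanlinna-Pick problem. The single algebraic fact driving both implications is the identity $\det F = F_{11}F_{22}-F_{12}F_{21}$ evaluated at the nodes $\lambda_k$.

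For $(1)\Rightarrow(2)$, suppose $x\colon\d\to\ov{\cale}$ is analytic and satisfies \eqref{tetra-eqn1}. By Proposition \ref{tetra-intBLY} there is $F=[F_{ij}]_1^2\in\mathcal{S}^{2\times 2}$ with $x=(F_{11},F_{22},\det F)$, so that reading off coordinates at each node gives $F_{11}(\lambda_k)=x_1^k$, $F_{22}(\lambda_k)=x_2^k$ and $\det F(\lambda_k)=x_3^k$. I would then set $b_k:=F_{12}(\lambda_k)$ and $c_k:=F_{21}(\lambda_k)$; the determinant identity yields $b_kc_k=x_1^kx_2^k-x_3^k$, which is \eqref{tetra eqnsecond}, and $F(\lambda_k)$ is exactly the target matrix in \eqref{tetra eqn3}. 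Since $F$ lies in the Schur class, it solves the Nevanlinna-Pick problem \eqref{tetra eqn3}, so those data are solvable.

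For $(2)\Rightarrow(1)$, choose $b_k,c_k$ satisfying \eqref{tetra eqnsecond} and let $F\in\mathcal{S}^{2\times 2}$ solve the Nevanlinna-Pick problem \eqref{tetra eqn3}. By the converse half of Proposition \ref{tetra-intBLY}, the map $x:=(F_{11},F_{22},\det F)$ lies in $\hol(\d,\ov{\cale})$. Evaluating at $\lambda_k$ gives $F_{11}(\lambda_k)=x_1^k$, $F_{22}(\lambda_k)=x_2^k$ and, by \eqref{tetra eqnsecond}, $\det F(\lambda_k)=x_1^kx_2^k-b_kc_k=x_3^k$; hence $x$ satisfies \eqref{tetra-eqn1} and $(1)$ holds.

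The genuine analytic content in both directions---the passage between a $\ov{\cale}$-valued function and a Schur-class matrix function, and especially the nontrivial claim that the matrix built from an arbitrary $x\in\hol(\d,\ov{\cale})$ really does lie in the Schur class---is supplied entirely by Proposition \ref{tetra-intBLY} (proved in \cite{BLY}); what remains here is purely algebraic. The one point deserving care is the asymmetry between the implications: in $(1)\Rightarrow(2)$ the off-diagonal data $b_k,c_k$ are dictated by the chosen Schur representative of $x$ rather than free, and one must verify that they satisfy precisely the product relation $b_kc_k=x_1^kx_2^k-x_3^k$, whereas in $(2)\Rightarrow(1)$ any factorization of $x_1^kx_2^k-x_3^k$ as a product $b_kc_k$ may be fed in as off-diagonal interpolation data. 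I expect no real obstacle beyond correctly invoking the converse direction of Proposition \ref{tetra-intBLY}, which guarantees that no further normalization of $F_{12},F_{21}$ (unlike the normalization appearing in the forward direction) is required to return to $\ov{\cale}$.
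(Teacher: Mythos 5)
Your proposal is correct and follows essentially the same route as the paper: both directions use Proposition \ref{tetra-intBLY} to translate between $\hol(\d,\ov{\cale})$ and $\mathcal{S}^{2\times 2}$, with $b_k=F_{12}(\lambda_k)$, $c_k=F_{21}(\lambda_k)$ in the forward direction and the determinant identity closing both implications. The only cosmetic difference is that for $(2)\Rightarrow(1)$ the paper cites Proposition \ref{cond_Ebar} directly to conclude $(F_{11},F_{22},\det F)\in\hol(\d,\ov{\cale})$, while you invoke the converse half of Proposition \ref{tetra-intBLY}, which is the same fact.
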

\begin{proof}
$(1)\Rightarrow(2)$ Suppose there is an analytic function $x:\mathbb{D}\to \ov{\cale}$ such that equation \eqref{tetra-eqn1} holds.   By Proposition \ref{tetra-intBLY}, there is a function $F$ in the $2\times 2$ Schur class, that is, that $\| F \|_\infty \leq 1$ such that
\begin{equation}\label{tetra_property1}
x=(F_{11},\;F_{22},\;\det F).
\end{equation}
Let $b_k=F_{12}(\lambda_k)\;\;\text{and}\;\;c_k=F_{21}(\lambda_k),\; k=1,\dots,n.$
Then 
\[
F(\lambda_k)=\left[\begin{array}{lr}
x_1(\lambda_k)&F_{12}(\lambda_k)\\F_{21}(\lambda_k)&x_2(\lambda_k)
\end{array}\right]=\left[\begin{array}{lr}
x_1^k&b_k\\c_k&x_2^k
\end{array}\right],\] 
and so 
\[
x_3^k=x_3(\lambda_k)=x_1^kx_2^k-b_kc_k.
\] 
Thus 
\[
b_kc_k=x_1^kx_2^k-x_3^k, \quad  k=1,\dots,n.
\]
Hence the equations \eqref{tetra eqnsecond} are satisfied, and for this choice of $b_k,c_k$ the matricial Nevanlinna-Pick problem with the data \eqref{tetra eqn3} is solvable by $F.$    Condition (2) of the theorem is satisfied.

$(2)\Rightarrow(1)$ Suppose that $b_k,\;c_k$ exist such that the equations \eqref{tetra eqnsecond} hold and the Nevanlinna-Pick problem with data \eqref{tetra eqn3} is solvable by a function  $F\in \mathcal{S}^{2\times 2}$.    Let 
\[
x=(F_{11}, F_{22},\det F).
\]
By  Proposition \ref{cond_Ebar}, $x\in\hol(\d,\ov{\cale})$.

Since conditions \eqref{tetra eqn3} hold, for $k=1,\cdots,n,$
\begin{align*}
x_1(\lambda_k)&=F_{11}(\lambda_k)=x_1^k,\\
x_2(\lambda_k)&=F_{22}(\lambda_k)=x_2^k,\\
x_3(\lambda_k)&=\det F(\lambda_k)=x_1(\lambda_k)x_2(\lambda_k)-b_kc_k=x_3^k.
\end{align*}
Thus condition (1) of the theorem holds.
\end{proof}

As a consequence we obtain a solvability criterion for a finite interpolation problem for  $\hol(\d,\ov{\cale})$ in terms of the positivity of one of a family of matrices.
\begin{corollary} \label{easycor} Let $\lambda_1,\dots,\lambda_n$ be distinct points in $\mathbb{D}$ and let $(x_1^k,x_2^k,x_3^k)\in\cale$  such that $x_1^k x_2^k \neq x_3^k$ for $k=1,\dots, n$.
 The following three statements are equivalent.
 \begin{enumerate}
 \item[(1)] There exists  a function $x\in\hol(\d,\ov{\cale})$ such that
\[
x(\la_k)=  (x_1^k,x_2^k,x_3^k) \quad \mbox{ for }k=1,\dots, n;
\]
 \item[(2)] there exist $b_1,\dots,b_n$ and $c_1,\dots,c_n\in\mathbb{C}$ such that
$b_kc_k=x_1^kx_2^k-x_3^k$ for $k=1,\dots, n$ and
 \begin{equation}\label{matricial Pick}
 \left[\frac{I-\left[\begin{array}{lr}
 x_1^k&b_k\\c_k&x_2^k
 \end{array}\right]^*\left[\begin{array}{lr}
 x_1^\ell&b_\ell\\c_\ell&x_2^\ell
 \end{array}\right]}{1-\overline{\lambda_k}\lambda_\ell}\right]_{k,\ell=1}^n\geq 0.
 \end{equation}
\item[(3)] there exists a rational  $\cale$-inner function $x:\mathbb{D}\to \ov{\cale}$ such that 
\begin{equation}\label{tetra-eqn2}
x(\lambda_k)=(x_1^k,x_2^k,x_3^k),\quad  k=1,\dots,n.
\end{equation}
 \end{enumerate}
 \end{corollary}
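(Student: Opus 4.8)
The plan is to deduce the corollary entirely from results already in hand: Theorem~\ref{tetra_star2}, the classical matricial Pick criterion recalled just before it, and the characterization of solvable interpolation problems by rational $\mathcal{E}$-inner functions quoted at the end of Section~\ref{tetrablock}. No fresh analysis is required; the content lies in correctly matching these three statements. First I would establish $(1)\Leftrightarrow(2)$. By Theorem~\ref{tetra_star2}, statement~(1) holds if and only if there exist $b_k,c_k\in\C$ satisfying $b_kc_k=x_1^kx_2^k-x_3^k$ for each $k$ such that the matricial Nevanlinna--Pick problem with data $\lambda_k\mapsto\begin{bmatrix} x_1^k & b_k\\ c_k & x_2^k\end{bmatrix}$, $k=1,\dots,n$, is solvable. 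By the matricial Pick theorem quoted before Theorem~\ref{tetra_star2}, this Nevanlinna--Pick problem is solvable if and only if the Pick matrix built from the target matrices $W_k=\begin{bmatrix} x_1^k & b_k\\ c_k & x_2^k\end{bmatrix}$, namely the block matrix in~\eqref{matricial Pick}, is positive. Substituting this equivalence into the criterion furnished by Theorem~\ref{tetra_star2} gives exactly~(2).

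Next I would handle $(1)\Leftrightarrow(3)$. The implication $(3)\Rightarrow(1)$ is immediate, since a rational $\mathcal{E}$-inner function is in particular an analytic map $\D\to\ov{\cale}$ and hence a solution of the interpolation problem in~(1). For $(1)\Rightarrow(3)$ I would invoke the result of \cite[Theorem 8.1]{BLY}, quoted at the end of Section~\ref{tetrablock}, that every solvable finite interpolation problem for $\hol(\D,\ov{\cale})$ admits a rational $\mathcal{E}$-inner solution; this upgrades the analytic solution of~(1) to the rational $\mathcal{E}$-inner solution demanded by~(3).

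Since each step is a direct appeal to a cited result, there is no genuine obstacle. The one point requiring care is the bookkeeping in $(1)\Leftrightarrow(2)$: I must check that the target matrices $W_k$ entering the classical Pick matrix are precisely the $2\times2$ matrices appearing in~\eqref{tetra eqn3}, so that~\eqref{matricial Pick} is indeed the correct positivity condition. The standing hypothesis $x_1^kx_2^k\neq x_3^k$ forces $b_kc_k\neq0$, hence $b_k,c_k\neq0$, but it is not essential to either equivalence.
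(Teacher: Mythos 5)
Your proposal is correct and follows essentially the same route as the paper: the equivalence $(1)\Leftrightarrow(2)$ is obtained by combining Theorem~\ref{tetra_star2} with the matricial Pick criterion, and $(1)\Leftrightarrow(3)$ is exactly the content of \cite[Theorem 8.1]{BLY} as quoted at the end of Section~\ref{tetrablock}. The paper's proof is just these two observations stated in two sentences, so your more detailed write-up (including the easy direction $(3)\Rightarrow(1)$) is a faithful elaboration of it.
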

The statement  $(1)\Leftrightarrow(2)$ follows immediately from Theorem \ref{tetra_star2} and Pick's theorem. The statement that  $(1)\Leftrightarrow(3)$ is contained in \cite[Theorem 8.1]{BLY}. 

\section{The structured singular value and the tetrablock}\label{mu}

The original motivation for the study of the tetablock in \cite{AWY07} was the wish to throw light on the ``problem of $\mu$-synthesis", which arises in the theory of robust control \cite{Do,doyle2,doylestein,DuPa}.   Operator theorists were greatly intrigued to learn around 1980 that some of their favourite theorems, such as those of Pick and Nehari, played a significant role in some problems of engineering design.  It was immediately a challenge to extend those theorems to provide an analysis of some of the more subtle optimization problems posed by engineers.  Up to now operator theorists have had limited success in meeting this challenge, and so it remains relevant to analyse test cases of these optimization problems.

The symbol $\mu$ is used to denote the {\em structured singular value} of a matrix relative to a space of linear transformations -- see \cite{DP,DuPa} for full definitions.  The structured singular value is a cost function which generalises the operator norm and is designed to reflect structural information about modelling uncertainty.  The usual operator norm of a matrix and the spectral radius of a square matrix are both instances of $\mu$.  Accordingly, two special cases of the $\mu$-synthesis problem are the classical Nevanlinna-Pick problem and its spectral variant, in which the operator norm is replaced by the spectral radius.  The tetrablock is associated with a third special case of $\mu$, which we denote by $\mudiag$.

\begin{definition}\label{defmudiag}
$\Diag$ denotes the space of diagonal $2\times 2$ matrices over $\c$.\\
For any $2\times 2$ matrix $A$,
\[
\mudiag(A) \df \left( \inf \{\|X\|: X\in\Diag, \, 1-AX  \mbox{ is singular}\}  \right)\inv.
\]
In the event that $1-AX$ is nonsingular for every $X\in\Diag$ we define $\mudiag(A)$ to be $0$.
\end{definition}
The tetrablock is connected to $\mudiag$ by the simple fact
 \cite[Theorem 9.1]{AWY07} that a $2\times 2$ matrix $A=[a_{ij}]$ satisfies $\mudiag(A) < 1$ if and only if $(a_{11}, a_{22}, \det A) \in \cale$.

The {\em   $\mudiag$-synthesis problem} is the following special case of $\mu$-synthesis. 
\vspace*{0.3cm}

{\em Given distinct points $\lambda_1, \dots, \lambda_n$ in
 $\mathbb{D}$ and $2\times 2$ matrices
$W_1,\dots, W_n$, where $ n \ge 1$, find conditions for the existence of an
analytic $2\times 2$ matrix-valued function $F$ on $\mathbb{D}$ such
that}
$$
F(\lambda_k) = W_k, \qquad k =1, 2, \dots, n,
$$
{\it and}
$$
\mudiag(F(\lambda)) \le 1 \quad  \mbox{ for all } \lambda \in
\mathbb{D}.
$$
There is a Matlab package \cite{M} for the numerical solution of this problem, but as yet no very efficient algorithm and little supporting theory.
The problem can be reduced to the finite interpolation problem for $\hol(\d,\ov{\cale})$, as is shown in the following result, which is \cite[Theorem 9.2]{AWY07}. 
\begin{proposition}\label{musynthequivfortetra} 
Let $\lambda_1,\dots,\lambda_n$ be distinct points in $\mathbb{D}$ and let 
\[
W_k=\begin{bmatrix}w_{ij}^k\end{bmatrix}_{i,j=1}^2 \quad \mbox{ for }k=1,\dots, n,
\]
  be $2\times 2$ matrices such that 
$ w_{12}^k w_{21}^k \neq  0$ for $k=1,\dots, n$.
The following statements are equivalent.
\begin{enumerate}
\item There exists an analytic function $F:\d\to\c^{2\times 2}$ such that 
\[
F(\la_k) = W_k \quad \mbox{ for } k=1,\dots, n
\]
and
\beq\label{mu-ineq}
\sup_{\la\in\D} \mu_{\mathrm{Diag}}(F(\lambda))  \leq 1;
\eeq
\item there exists an analytic function $\phi \in\hol(\D, \ov{\cale})$ such that
\beq\label{x-ineq}
\phi (\la_k) = (w_{11}^k,w_{22}^k,\det W_k) \quad \mbox{ for } k= 1,\dots,n.
\eeq
\end{enumerate}
\end{proposition}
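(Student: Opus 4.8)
The plan is to derive both implications from one elementary observation: the value of $\mudiag$ at a $2\times2$ matrix $A=[a_{ij}]$ depends only on the triple $(a_{11},a_{22},\det A)$. Indeed, for a diagonal matrix $X=\diag(x_1,x_2)$ a direct computation gives
\[
\det(1-AX)=1-a_{11}x_1-a_{22}x_2+(\det A)\,x_1x_2,
\]
so the set of $X\in\Diag$ for which $1-AX$ is singular, and hence $\mudiag(A)$, is determined by $(a_{11},a_{22},\det A)$ alone. I shall combine this with two further facts: the cited identity $\mudiag(A)<1\iff(a_{11},a_{22},\det A)\in\cale$, and the bound $\mudiag(A)\le\|A\|$ (if $A\neq0$ and $1-AX$ is singular then $AXv=v$ for some unit vector $v$, whence $\|A\|\,\|X\|\ge1$, so $\|X\|\ge\|A\|\inv$ and the infimum defining $\mudiag(A)\inv$ is at least $\|A\|\inv$). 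I also note the homogeneity $\mudiag(rA)=r\,\mudiag(A)$ for $r\ge0$, immediate from the same infimum.

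For $(1)\Rightarrow(2)$, given $F$ I set $\phi=(F_{11},F_{22},\det F)$, which is analytic and satisfies $\phi(\la_k)=(w_{11}^k,w_{22}^k,\det W_k)$ at once. To see $\phi(\la)\in\ov\cale$ for each $\la\in\D$, I use $\mudiag(F(\la))\le1$ together with homogeneity: for $0<r<1$ we get $\mudiag(rF(\la))=r\,\mudiag(F(\la))<1$, so the triple $(rF_{11}(\la),rF_{22}(\la),r^2\det F(\la))$ lies in $\cale$; letting $r\to1-$ and using that $\ov\cale$ is closed gives $\phi(\la)\in\ov\cale$. This direction does not use the hypothesis $w_{12}^kw_{21}^k\neq0$.

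For $(2)\Rightarrow(1)$, I first apply Proposition~\ref{tetra-intBLY} to the given $\phi$ to obtain a Schur-class $F=[F_{ij}]$ with $\phi=(F_{11},F_{22},\det F)$ and $F_{21}$ outer or zero. The off-diagonal product satisfies $F_{12}F_{21}=F_{11}F_{22}-\det F=\phi_1\phi_2-\phi_3=:h$, and at a node $h(\la_k)=w_{11}^kw_{22}^k-\det W_k=w_{12}^kw_{21}^k\neq0$ by hypothesis; in particular $F_{21}\not\equiv0$, so being outer it is zero-free in $\D$, and $F_{12}(\la_k)=h(\la_k)/F_{21}(\la_k)\neq0$ as well. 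The idea is to keep the diagonal entries and the product $F_{12}F_{21}$ (hence the whole triple $\phi$) unchanged, while correcting the individual off-diagonal values at the nodes. To this end I seek a zero-free analytic $u$ on $\D$ with $u(\la_k)=w_{12}^k/F_{12}(\la_k)=:\tau_k$; since each $\tau_k\neq0$, one may take $u=\e^{v}$ with $v$ any polynomial satisfying $v(\la_k)=\log\tau_k$. Setting
\[
G=\begin{bmatrix}F_{11}&uF_{12}\\u\inv F_{21}&F_{22}\end{bmatrix},
\]
$G$ is analytic with $\det G=F_{11}F_{22}-F_{12}F_{21}=\phi_3$, so $(G_{11},G_{22},\det G)=\phi$; a short check using $u(\la_k)=\tau_k$ and $h(\la_k)=w_{12}^kw_{21}^k$ gives $G(\la_k)=W_k$. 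Finally $\mudiag(G(\la))\le1$: by condition (5) of Proposition~\ref{cond_Ebar} there is a contraction $A$ with $(a_{11},a_{22},\det A)=\phi(\la)$, so by the opening observation $\mudiag(G(\la))=\mudiag(A)\le\|A\|\le1$.

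I expect the only real obstacle to be the off-diagonal interpolation in $(2)\Rightarrow(1)$: Proposition~\ref{tetra-intBLY} matches the diagonal entries and the determinant, but only fixes the \emph{product} $F_{12}(\la_k)F_{21}(\la_k)$, not the two factors separately. Resolving this is exactly where the hypothesis $w_{12}^kw_{21}^k\neq0$ is essential, since it guarantees $h(\la_k)\neq0$ and hence that the logarithms $\log\tau_k$ needed to build the zero-free multiplier $u$ exist.
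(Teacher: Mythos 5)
Your proof is correct. Note first that the paper itself offers no proof of this proposition: it is quoted verbatim as \cite[Theorem 9.2]{AWY07}, so your argument is a self-contained substitute for that citation, built only from ingredients the paper makes available (the fact that $\mudiag(A)<1$ if and only if $(a_{11},a_{22},\det A)\in\cale$, condition (5) of Proposition \ref{cond_Ebar}, and Proposition \ref{tetra-intBLY}). All the steps check out: the computation $\det(1-AX)=1-a_{11}x_1-a_{22}x_2+(\det A)x_1x_2$ does show that $\mudiag$ is a function of $(a_{11},a_{22},\det A)$ alone; the scaling $r\to 1-$ correctly converts the non-strict bound $\mudiag\le 1$ into membership of the closed set $\ov{\cale}$; and in the converse direction the zero-free multiplier $u$ changes the off-diagonal node values while preserving analyticity, the diagonal entries and the determinant, hence the $\mudiag$ bound. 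One remark on economy: the detour through Proposition \ref{tetra-intBLY} is not actually needed for $(2)\Rightarrow(1)$. Since each $w_{12}^k\neq0$, you can choose a zero-free analytic $g$ on $\D$ with $g(\la_k)=w_{12}^k$ (again $g=\e^{v}$ for a polynomial $v$) and define directly
\[
F=\begin{bmatrix}\phi_1 & g\\ (\phi_1\phi_2-\phi_3)/g & \phi_2\end{bmatrix},
\]
which is analytic, has $\det F=\phi_3$, satisfies $F(\la_k)=W_k$ by the same computation with $h=\phi_1\phi_2-\phi_3$, and satisfies $\mudiag(F(\la))\le 1$ by your final argument; this avoids the structure theorem entirely. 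Your correction step is, however, genuinely needed in some form: the paper's closing remark in Section \ref{mu} asserts that the outline following Proposition \ref{tetra-intBLY} already yields $F$ with $F(\la_k)=W_k$, but that outline only matches the product $F_{12}F_{21}$ at the nodes, not the two factors separately, and your multiplier $u$ (or the direct choice of $g$) is exactly the missing repair.
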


On combining Proposition \ref{musynthequivfortetra}  with  Theorem \ref{tetra_star2} we obtain a criterion for the solvability of a
$\mudiag$-synthesis problem.
\begin{theorem} \label{main-theorem}
Let $\lambda_1,\dots,\lambda_n$ be distinct points in $\mathbb{D}$ and let 
\[
W_k=\begin{bmatrix}w_{ij}^k\end{bmatrix}_{i,j=1}^2 \quad \mbox{ for }k=1,\dots, n
\]
  be $2\times 2$ matrices such that 
$ w_{12}^k w_{21}^k \neq  0$   for $k=1,\dots, n$.
The following statements are equivalent.
\begin{enumerate}
\item There exists an analytic function $F:\d\to\c^{2\times 2}$ such that 
\[
F(\la_k) = W_k \quad \mbox{ for } k=1,\dots, n
\]
and
\beq\label{mu-ineq2}
\sup_{\la\in\D} \mudiag(F(\lambda))  \leq 1;
\eeq
 \item[(2)] there exist $b_1,\dots,b_n$ and $c_1,\dots,c_n\in\mathbb{C}$ such that
\[
b_kc_k=w_{11}^k w_{22}^k-\det W_k \quad \mbox{ for } k=1,\dots, n
\]
and
 \begin{equation}\label{matricial Pick condition}
 \left[\frac{I-\left[\begin{array}{lr}
 w_{11}^k&b_k\\c_k&w_{22}^k
 \end{array}\right]^*\left[\begin{array}{lr}
 w_{11}^\ell&b_\ell\\c_\ell&w_{22}^\ell
 \end{array}\right]}{1-\overline{\lambda_k}\lambda_\ell}\right]_{k,\ell=1}^n\geq 0.
 \end{equation}
 \end{enumerate}
 \end{theorem}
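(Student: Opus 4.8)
The plan is to derive Theorem~\ref{main-theorem} by composing the two equivalences already in hand: Proposition~\ref{musynthequivfortetra}, which converts the $\mudiag$-synthesis problem into an interpolation problem for $\hol(\d,\ov\cale)$, and Theorem~\ref{tetra_star2}, which converts the latter into a parametrised family of classical matricial Nevanlinna--Pick problems. A final application of the matricial Pick theorem turns ``solvability of the Nevanlinna--Pick problem'' into ``positivity of the Pick matrix,'' which is exactly the condition \eqref{matricial Pick condition}. The one algebraic identity that makes the two results dovetail is
\[
w_{12}^k w_{21}^k = w_{11}^k w_{22}^k - \det W_k ,
\]
so that the hypothesis $w_{12}^k w_{21}^k \neq 0$ is precisely the nondegeneracy $x_1^k x_2^k \neq x_3^k$ for the target points $x^k = (w_{11}^k, w_{22}^k, \det W_k)$, and the product condition $b_k c_k = w_{11}^k w_{22}^k - \det W_k$ of clause (2) is identical to the product condition $b_k c_k = x_1^k x_2^k - x_3^k$ of Theorem~\ref{tetra_star2}.

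Concretely, first I would invoke Proposition~\ref{musynthequivfortetra}: since $w_{12}^k w_{21}^k \neq 0$, statement (1) is equivalent to the existence of $\phi \in \hol(\d,\ov\cale)$ with $\phi(\la_k) = (w_{11}^k, w_{22}^k, \det W_k)$ for $k=1,\dots,n$. Next I would feed these target points into Theorem~\ref{tetra_star2}, whose clause (1) is exactly the existence of such a $\phi$; this yields the equivalence of the existence of $\phi$ with the existence of $b_k, c_k \in \C$ satisfying $b_k c_k = w_{11}^k w_{22}^k - \det W_k$ for which the Nevanlinna--Pick data \eqref{tetra eqn3} (with the $x_i^k$ replaced by the corresponding $w_{ii}^k$) are solvable. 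Finally, by the matricial Pick theorem recalled at the start of Section~\ref{NPforE}, solvability of that Nevanlinna--Pick problem is equivalent to positivity of the associated Pick matrix, which is \eqref{matricial Pick condition}. Chaining these three equivalences gives $(1)\Leftrightarrow(2)$.

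The step most in need of care is not computational but is rather the matching of hypotheses between the cited results. Theorem~\ref{tetra_star2} is stated for target points lying in the \emph{open} tetrablock $\cale$, whereas at this stage we know only that $(w_{11}^k, w_{22}^k, \det W_k) \in \ov\cale$. I expect this to be the only genuine obstacle, and I would resolve it by observing that the proof of Theorem~\ref{tetra_star2} nowhere uses openness: both implications rest solely on Proposition~\ref{tetra-intBLY} and the characterisation of $\ov\cale$ in Proposition~\ref{cond_Ebar}, together with Pick's theorem, all of which are valid verbatim for the closed tetrablock. Alternatively, one can argue directly that the interpolation nodes land in $\cale$: if any $(w_{11}^k, w_{22}^k, \det W_k)$ lay in $b\cale$ then $|\det W_k| = 1$, forcing $\det F$ to be a unimodular constant by the maximum principle, a degenerate situation that can be treated separately. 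Once the hypotheses are aligned, the theorem is an immediate composition of results already proved, and no further estimates are required.
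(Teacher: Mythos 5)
Your proposal is correct and takes the same route as the paper: the paper's proof is precisely the chain Proposition \ref{musynthequivfortetra} $\to$ Theorem \ref{tetra_star2} $\to$ matricial Pick theorem, hinging on the identity $w_{11}^k w_{22}^k-\det W_k=w_{12}^k w_{21}^k$. You are in fact more careful than the paper on the one real subtlety: Theorem \ref{tetra_star2} assumes the target points lie in the open set $\cale$, whereas here $(w_{11}^k,w_{22}^k,\det W_k)$ is only known to lie in $\ov{\cale}$; the paper applies the theorem without comment, and your primary resolution---that its proof rests only on Proposition \ref{tetra-intBLY}, Proposition \ref{cond_Ebar} and Pick's theorem, none of which uses openness---is the right one. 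One caution: your ``alternative'' resolution is incorrect, since $\ov{\cale}\setminus\cale$ is the topological boundary of $\cale$, which is strictly larger than the distinguished boundary $b\cale$ of Proposition \ref{conditionsforbE}, so a target point in $\ov{\cale}\setminus\cale$ need not satisfy $|\det W_k|=1$ (for example $(1,0,0)\in\ov{\cale}\setminus\cale$ has third coordinate $0$). As that remark is dispensable, it does not affect the validity of your argument.
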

\begin{proof}
By Theorem \ref{musynthequivfortetra}, since  $w_{12}^k w_{21}^k \neq 0$ for $k=1,\dots, n,$ condition (1) is equivalent to the existence of $\varphi\in \hol(\mathbb{D},\ov{\cale})$ such that 
\[
\varphi(\lambda_k)= (w_{11}^k, w_{22}^k,\det W_k) \quad \mbox{ for } k=1,\dots, n.
\]  
Hence, by Theorem \ref{tetra_star2}, the matricial Nevanlinna-Pick problem with data 
\begin{equation*}
\lambda_k\mapsto\left[\begin{array}{lr}
w_{11}^k&b_k\\c_k&w_{22}^k
\end{array}\right],\qquad  k=1,\dots, n,
\end{equation*}
is solvable for some $b_k, c_k\in\mathbb{C}$ satisfying 
\begin{equation*}
b_kc_k=w_{11}^k w_{22}^k-\det W_k \quad \mbox{ for } k=1,\dots, n.
\end{equation*}
By the matricial version of Pick's theorem, this matricial Nevanlinna-Pick problem is solvable if and only if the Pick condition \eqref{matricial Pick condition} is satisfied.
\end{proof}


The proof suggests a way to construct solutions of a $\mu_{\rm Diag}$-synthesis  
 problem.  Suppose we are given $\lambda_1, \dots, \lambda_n,
W_1, \dots, W_n$ such that condition (2) of Theorem \ref{main-theorem} is satisfied,
and we can somehow find suitable numbers $b_1,\dots,b_n, c_1,\dots,c_n$ to
make the Pick matrix positive.  This is of course a nonconvex problem, and we do not know of a good algorithm to find the $b_k$ and $c_k$.
Once they are found, however, there are various ways of constructing functions $\chi$ in the Schur
class  such that
$$
\chi(\lambda_k) = \\
  \left[\begin{array}{ccc}
w_{11}^k &&b_k\\ \\
c_k&& w_{22}^k\end{array}\right], \qquad k=1,\dots,n,
$$
(see for example \cite{bgr}).  The function
$$
\varphi = (\chi_{11}, \chi_{22}, \det \chi)
$$
is then an analytic function from $\D$ to $\overline{\cale}$ such that
$$
\varphi(\lambda_k) = (w^k_{11}, w^k_{22},\det W_k), \qquad k=1,\dots,n,
$$
and the outline following Proposition \ref{tetra-intBLY} shows how to use $\varphi$ to
construct an analytic $2 \times 2$ matrix-valued function $F$ in $\D$ such
that $F(\lambda_k)= W_k$ and $\mudiag(F(\lambda)) \leq 1$ for all $\lambda \in \D$.\\


\begin{thebibliography}{50}
\bibitem{aak}
V. M.  Adamyan, D. Z.  Arov and M. G.  Krein, Analytic properties of
Schmidt pairs of a Hankel operator and generalized Schur--Takagi
problem, {\it Mat.  Sbornik} {\bf 86} (1971) 33--73. 

\bibitem{AWY07}
A. A. Abouhajar, M. C. White and N. J. Young,
A Schwarz lemma for a domain related to $\mu$-synthesis,
\emph{J. Geom. Anal.}, {\bf 17} (4) (2007) 717--750.


\bibitem{AY4} J. Agler and N. J. Young,  The two-point
spectral Nevanlinna--Pick problem,
{\em Integral Equ. Oper. Theory} {\bf 37} (2000) 375--385.

\bibitem{bb} J. A. Ball and V. Bolotnikov, Nevanlinna--Pick interpolation for Schur--Agler class functions on domains with matrix polynomial
defining function in $\C^n$, {\it New York J. Math.} {\bf 11} (2005)
247--290.


\bibitem{BallHuaman}  J. A. Ball and M. D. Guerra Huam\'an, Test functions, Schur-Agler classes and transfer-function realizations: the matrix-valued setting, {\em Complex  Anal. Oper. Theory} {\bf 7} (2003) 529-575.

\bibitem{BMV} J. A. Ball, G. Marx and V. Vinnikov, Interpolation and transfer-function realization for the noncommutative Schur-Agler class, arXiv 1602.00762 .

\bibitem{ballHorst1}  J. A. Ball and S. ter Horst, Robust control, multidimensional systems and multivariable Nevanlinna-Pick interpolation, In: Topics in Operator Theory, pp. 13--88, {\bf OT   203}  Birkh\"auser, Basel, 2010.

\bibitem{ballHorst2} J. A. Ball and S. ter Horst, Multivariable operator-valued Nevanlinna-Pick interpolation: a survey, In: Operator Algebras, Operator Theory and Applications, pp. 1--73, {\bf OT  195} Birkh\"auser, Basel, 2010.

\bibitem{bgr} J. A. Ball, I. C. Gohberg and L. Rodman,
{\em Interpolation of Rational Matrix Functions}, Operator Theory:
Advances and Applications {\bf 45},  Birkh\"auser, Basel, 1990.

\bibitem{bh}   J. A. Ball and J. W. Helton,  Interpolation problems of Pick-Nevanlinna and Loewner types for meromorphic matrix functions: parametrization of the set of all solutions. \emph{Integral Equ. Oper. Theory} \textbf{ 9} (1986) 155--203.

\bibitem{tetrabhatt}
T. Bhattacharyya, The tetrablock as a spectral set,
\emph{Indiana Univ. Math. J.} \textbf{63} (6) (2014) 1601-1629.


\bibitem{BLY} D. C. Brown, Z. A. Lykova and N. J. Young,
A rich structure related to the construction of holomorphic matrix functions,
  {\it J. Funct. Anal.} {\bf 272} (2017) 1704--1754. 


\bibitem{Do} J. C. Doyle, Analysis of feedback systems
with structured  uncertainties. {\em Proc. IEE-D} {\bf 129} (1982) 242--250.

\bibitem{DP} J. C. Doyle and A. Packard, The complex structured
singular value, {\em Automatica J. IFAC}, {\bf 29} (1993) 71-109.

\bibitem{doyle2}
J. C. Doyle,
Structured uncertainty in control system design,
\emph{24th IEEE Conference on Decision and Control}, {\bf 24} (1985) 260-265.

\bibitem{doylestein}
J. C. Doyle and G. Stein,
Multivariable feedback design: concepts for a classical/modern synthesis,
\emph{IEEE Transactions on Automatic Control}, {\bf 26} (1981)  4-16.


\bibitem{DM}  M. A. Dritschel and S. McCullough, Test functions, kernels, realizations  and interpolation, in {\em Operator Theory, Structured Matrices and Dilations:  T. Constantinescu Memorial Volume} (ed. M. Bakonyi, A. Gheondea, M. Putinar and J. Rovnyak), pp. 153-179, Theta Series in Advanced Mathematics, Bucharest, 2007.

\bibitem{DuPa}
G. E. Dullerud and F. G. Paganini,
\emph{A Course in Robust Control Theory: A Convex Approach},
Springer, 2000. 

\bibitem{edizwontetra}
A. Edigarian, L. Kosi\'{n}ski and W. Zwonek,
The Lempert theorem and the tetrablock,
\emph{J. Geom. Anal.} \textbf{23} (4) (2013) 1818-1831.



\bibitem{jp}
M. Jarnicki and P. Pflug, {\em Invariant Distances and Metrics in Complex Analysis},  2nd Extended Edition, De Gruyter, Berlin, 2013.

\bibitem{kos} 
L. Kosi\'{n}ski and W. Zwonek,
Nevanlinna-Pick problem and uniqueness of left inverses in convex domains, symmetrized bidisc and tetrablock, {\em J. Geom.  Anal.} {\bf 26} (2016)  1863-1890.

\bibitem{M} {\em Matlab} $\mu$-{\em Analysis and Synthesis
Toolbox},  The Math Works
Inc.,Natick, Massachusetts (http://www.mathworks.com/products/muanalysis/).


\bibitem{NF} B. Sz-Nagy and C. Foias, {\em Harmonic Analysis of
Operators on Hilbert Space}, Akad\' emiai Kiad\' o,
Budapest, 1968.


\bibitem{NTT} N. Nikolov, P. J. Thomas and  M. Trybula, Gromov (non)hyperbolicity of certain domains in $\c^2$,
{\em Forum Math. } {\bf 28} (2016) 783--794.


\bibitem{pal} S. Pal,  The failure of rational dilation on the tetrablock, {\em J. Funct. Anal.}
{\bf 269} (2015) 1903-1924.

\bibitem{Pick}  
G.  Pick, \"Uber die Beschr\"ankungen analytischer Funktionen, welche
durch vorgegebene Funktionswerte bewirkt werden, {\it Math.  Ann.}  {\bf
77} (1916) 7--23.



\bibitem{sar67}
D. Sarason, Generalised interpolation in $H^\infty$,
\emph{Trans. Amer. Math. Soc.} \textbf{127} (2) (1967) 179-203.


\bibitem{You08} N. J. Young, The automorphism group of the tetrablock, {\em J. London Math. Soc.} (2)  {\bf 77} (2008) 757--770.

\end{thebibliography}
\end{document}